\title{The integral cohomology of the Hilbert scheme
of points on a surface}
\author{Burt Totaro}
\date{  }
\def\Z{\text{\bf Z}}
\def\F{\text{\bf F}}
\def\arrow{\rightarrow}
\def\Ext{\operatorname{Ext}}
\def\Extcal{\mathcal{E}xt}
\def\I{\mathcal{I}}
\def\vir{\operatorname{vir}}
\def\coker{\operatorname{coker}}
\def\K{\widetilde{K}}
\def\homcal{\mathcal{H}om}
\begin{document}
\maketitle
\newtheorem{theorem}{Theorem}[section]
\newtheorem{corollary}[theorem]{Corollary}
\newtheorem{lemma}[theorem]{Lemma}

\theoremstyle{definition}
\newtheorem{definition}[theorem]{Definition}
\newtheorem{example}[theorem]{Example}

\theoremstyle{remark}
\newtheorem{remark}[theorem]{Remark}

The Hilbert scheme $X^{[n]}$ of $n$ points
on a smooth complex surface $X$ is a complex manifold
of dimension $2n$, which can be viewed as a resolution of singularities
of the symmetric product $S^nX$. The rational cohomology
of $X^{[n]}$ is known, but the integral cohomology
is more subtle. Any torsion in cohomology or other invariants
could conceivably be useful for rationality problems.

In this paper, we show
that if $X$ is a smooth complex projective surface
with torsion-free cohomology, then the Hilbert scheme $X^{[n]}$
has torsion-free cohomology for every $n\geq 0$.
(Since we know the Betti numbers of $X^{[n]}$ by G\"ottsche
(stated in Theorem \ref{betti}),
this amounts to an additive calculation of $H^*(X^{[n]},\Z)$.)
We also show that if the integral Chow motive of $X$ is trivial
(a finite direct sum of Tate motives), then the integral
Chow motive of $X^{[n]}$ is trivial for all $n$
(Theorem \ref{motive}).

There are some earlier results in this direction.
When $X$ is the complex projective plane,
Ellingsrud and Str\o mme found an algebraic cell decomposition
of the Hilbert scheme $X^{[n]}$, which implies
that its integral cohomology is torsion-free \cite[Theorem 1.1]{ES}.
Markman showed that the integral cohomology of the Hilbert scheme
$X^{[n]}$
is torsion-free for a smooth projective surface $X$
with a nontrivial Poisson structure,
or equivalently when the anticanonical bundle $-K_X$
has a nonzero section \cite[Theorem 1]{Markman}. That includes
the important case where $X$ is a K3 surface, so that $X^{[n]}$
is hyperk\"ahler. In this paper, we show that the Poisson
assumption can be dropped completely. The fact that $H^*(X,\Z)$
torsion-free implies $H^*(X^{[2]},\Z)$ torsion-free was shown (in fact
for $X$ of any dimension)
in \cite[Theorem 2.2]{TotaroHilbert}.
Finally, for $X$ a smooth projective surface
with first Betti number zero,
Li and Qin gave an explicit basis for $H^*(X^{[n]},\Z)$ modulo torsion
\cite[Theorem 1.2]{LQ}.

Our proofs combine Markman's ideas with the reduced obstruction
theory for nested Hilbert schemes of surfaces
found by Gholampour and Thomas \cite{GT}.

Several related questions remain open. Do the results
of this paper extend to compact complex surfaces, or even
to noncompact complex surfaces? (For the Hilbert square $X^{[2]}$,
the answer is yes, by \cite[Theorem 2.2]{TotaroHilbert}.) Second, say
for a smooth projective surface $X$, is the graded
abelian group $H^*(X^{[n]},\Z)$ determined by the graded
abelian group $H^*(X,\Z)$ when
$H^*(X,\Z)$ has torsion? (We know that the graded vector space
$H^*(X^{[2]},\F_2)$ is not determined by the graded vector 
space $H^*(X,\F_2)$,
by \cite[Example 2.5]{TotaroHilbert}.) Analogously,
is the integral Chow motive of $X^{[n]}$ determined
by that of $X$? Finally, for a complex
manifold $X$ of any dimension, does $H^*(X,\Z)$ torsion-free
imply $H^*(X^{[3]},\Z)$ torsion-free?

I thank Stefan Schreieder for useful discussions.

\section{Betti numbers of the Hilbert scheme}

We recall here the calculation of the Betti numbers
of the Hilbert schemes of points on a surface
\cite[equation (2.1)]{GottscheICM}.
This was proved for smooth projective surfaces by G\"ottsche
and generalized to all smooth complex analytic surfaces
with finite Betti numbers by de Cataldo and Migliorini
\cite[Theorem 5.2.1]{dCManalytic}.

Define the Poincar\'e polynomial
of a space $Y$ by $p(Y,t)=\sum_j b_j(Y)t^j$.

\begin{theorem}
\label{betti}
For a smooth
complex analytic surface $X$ with finite Betti numbers,
the Betti numbers of the Hilbert
schemes $X^{[n]}$ are given by the generating function
$$\sum_{n\geq 0}p(X^{[n]},t)q^n=\prod_{k\geq 1}\prod_{j=0}^4
(1-(-t)^{2k-2+j}q^k)^{(-1)^{j+1}b_j(X)}.$$
\end{theorem}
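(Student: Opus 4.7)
The plan is to use the Hilbert--Chow morphism $\pi \colon X^{[n]} \to S^n X$ as a semismall resolution and combine the resulting additive decomposition of cohomology with Macdonald's formula for symmetric products. First, stratify $S^n X$ by partition type: for $\lambda = 1^{a_1} 2^{a_2} \cdots$ with $\sum_k k a_k = n$, the locally closed stratum $S^n_\lambda X \subset S^n X$ of $0$-cycles whose multiplicity partition is $\lambda$ has complex codimension $n - |\lambda|$ in $S^n X$, where $|\lambda| := \sum_k a_k$. The preimage $\pi^{-1}(S^n_\lambda X)$ is a topological fiber bundle over $S^n_\lambda X$ with fiber $\prod_k H_k^{a_k}$, where $H_k := \mathrm{Hilb}^k(\C^2, 0)$ is the punctual Hilbert scheme; since Brian\c{c}on showed that $H_k$ is irreducible of complex dimension $k - 1$, the fiber has dimension $n - |\lambda|$, and the map $\pi$ is semismall.

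Second, apply the decomposition theorem for semismall maps to $\pi$, in the form established for the Hilbert--Chow morphism by de Cataldo and Migliorini. The local systems produced on each stratum come from the monodromy action on the (one-dimensional) top cohomology of $\prod_k H_k^{a_k}$, which factors through the wreath product permuting support points of equal multiplicity and acts trivially. This yields a canonical isomorphism of graded vector spaces
$$H^*(X^{[n]}, \Q) \;\cong\; \bigoplus_{\lambda \vdash n} H^{*-2(n-|\lambda|)}(S^\lambda X, \Q), \qquad S^\lambda X := \prod_{k \geq 1} S^{a_k} X,$$
or equivalently the Poincar\'e polynomial identity
$$p(X^{[n]}, t) \;=\; \sum_{\lambda \vdash n} t^{\,2(n-|\lambda|)}\, p(S^\lambda X, t).$$

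Third, combine this with Macdonald's classical formula
$$\sum_{m \geq 0} p(S^m X, t)\, u^m \;=\; \frac{(1+tu)^{b_1(X)} (1+t^3 u)^{b_3(X)}}{(1-u)^{b_0(X)}(1-t^2 u)^{b_2(X)}(1-t^4 u)^{b_4(X)}}.$$
Since $p(S^\lambda X, t) = \prod_k p(S^{a_k} X, t)$, summing the additive identity over $n$ and reindexing by the multiplicities $(a_k)_{k \geq 1}$ factors the generating function as
$$\sum_{n \geq 0} p(X^{[n]}, t)\, q^n \;=\; \prod_{k \geq 1} \sum_{m \geq 0} p(S^m X, t)\, (t^{2k-2} q^k)^m.$$
Substituting $u = t^{2k-2} q^k$ into Macdonald's formula and using the identity $(-t)^{2k-2+j} = (-1)^j t^{2k-2+j}$ recovers exactly the product in the statement.

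The main obstacle is Step 2: establishing the semismall decomposition with trivial local systems so that the Betti numbers split cleanly over strata. For smooth projective $X$ this is essentially G\"ottsche's original computation via the Weil conjectures and Ellingsrud--Str\o mme's cell decomposition of $H_k$; extending to smooth complex analytic surfaces with finite Betti numbers is the content of \cite[Theorem 5.2.1]{dCManalytic}, which proves the relevant analytic version of the decomposition theorem in this setting.
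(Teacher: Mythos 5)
The paper does not prove this theorem; it states it as a known result, citing G\"ottsche for the projective case and de Cataldo--Migliorini \cite[Theorem 5.2.1]{dCManalytic} for general analytic surfaces with finite Betti numbers. Your outline is a correct reconstruction of exactly that cited argument (semismallness of the Hilbert--Chow morphism via Brian\c{c}on's irreducibility of the punctual Hilbert scheme, the decomposition theorem with trivial local systems over the partition strata, and Macdonald's formula for symmetric products), with the generating-function bookkeeping carried out correctly, so it matches the approach the paper relies on.
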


\section{Gholampour-Thomas's reduced obstruction theory}

Gholampour and Thomas constructed the following
``reduced'' obstruction theory for nested Hilbert schemes
of surfaces
\cite[Theorem 6.3]{GT}. This is easy
when $H^1(X,O)=H^2(X,O)=0$, and in general they show
how to remove the contributions of those two cohomology groups.

I would guess that the same obstruction theory exists
on any complex manifold of dimension 2. If so, then the results
of this paper would extend to compact complex surfaces.
Also, Gholampour and Thomas work with surfaces
over the complex numbers, but their proof works verbatim
over any field.

For natural numbers $n_1\geq n_2$, let $\pi$ be the projection
$$X^{[n_1]}\times X^{[n_2]}\times X\arrow X^{[n_1]}\times X^{[n_2]},$$
with the two universal subschemes $\mathcal{Z}_1,\mathcal{Z}_2$
and their ideal sheaves $\I_1\subset\I_2\subset O_{X^{[n_1]}\times
X^{[n_2]}\times X}$.

\begin{theorem}
\label{nested}
Let $X$ be a smooth geometrically connected
projective surface over a field $k$. For
any $n_1\geq n_2$, the 2-step
nested Hilbert scheme $X^{[n_1,n_2]}$ (of 0-dimensional
subschemes of degree $n_1$
containing a subscheme of degree $n_2$) carries a natural
perfect obstruction theory whose virtual cycle
$$[X^{[n_1,n_2]}]^{\vir}\in CH_{n_1+n_2}(X^{[n_1,n_2]})$$
has pushforward to the Chow groups of $X^{[n_1]}\times X^{[n_2]}$
equal to the Chern class $c_{n_1+n_2}(R\homcal_{\pi}(\I_1,\I_2)[1])$.
\end{theorem}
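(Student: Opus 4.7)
The plan is to follow the outline sketched in the paragraph preceding the theorem: first handle the case where $H^1(X, O) = H^2(X, O) = 0$, then remove the extra contributions that appear in general.

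First, I would build a natural deformation-obstruction complex on $X^{[n_1, n_2]}$ from the universal flag $\I_1 \subset \I_2$. The tangent and obstruction spaces at a flag $(Z_2 \subset Z_1)$ are controlled by the hypercohomology of a cone of the form
$$C := \operatorname{cone}\bigl(R\homcal_{\pi}(\I_1, \I_1) \oplus R\homcal_{\pi}(\I_2, \I_2) \arrow R\homcal_{\pi}(\I_1, \I_2)\bigr),$$
where the two maps are pre- and post-composition with the inclusion $\I_1 \subset \I_2$. Under the vanishing hypothesis $H^1(X, O) = H^2(X, O) = 0$, the trace map kills nothing interesting and (after a shift and dualization) $C$ produces a perfect two-term complex of amplitude $[-1, 0]$ and rank $n_1 + n_2$, equipped with a map to $\mathbb{L}_{X^{[n_1, n_2]}}$ via the Atiyah class of the universal flag. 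The pushforward of the resulting virtual cycle is then a direct Chern-class manipulation: in the easy case the summands $R\homcal_{\pi}(\I_j, \I_j)$ contribute only through their trace $R\pi_* O \cong O$, so what remains in $c_{n_1 + n_2}$ is exactly $c_{n_1 + n_2}(R\homcal_{\pi}(\I_1, \I_2)[1])$.

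In the general case the complexes $R\homcal_{\pi}(\I_j, \I_j)$ split as $R\pi_* O \oplus R\homcal_{\pi}(\I_j, \I_j)_0$, so $C$ picks up a trivial summand of virtual rank $h^1(O_X) + h^2(O_X)$ that does not correspond to any genuine deformation or obstruction of the flag. The main obstacle is to peel off these trivial summands at the level of the obstruction theory, not merely cohomologically, and to verify that the resulting reduced complex $E^\bullet_{\text{red}}$ still carries a map to $\mathbb{L}_{X^{[n_1, n_2]}}$ making it a perfect obstruction theory. This is where a semiregularity-type argument enters: one must show that the obstruction class factors through the trace-free quotient, so that the reduction is compatible with the Atiyah-class map. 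Once this is done, the reduced obstruction theory has virtual dimension $n_1 + n_2$, and the Chern-class manipulation of the easy case goes through verbatim to give the stated pushforward formula for $[X^{[n_1, n_2]}]^{\vir}$ in $CH_*\bigl(X^{[n_1]} \times X^{[n_2]}\bigr)$.
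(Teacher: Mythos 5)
The paper does not prove this statement at all: Theorem \ref{nested} is quoted verbatim from Gholampour--Thomas \cite[Theorem 6.3]{GT}, so the only basis for comparison is the shape of their construction as recorded after Corollary \ref{diagonal}. That discussion shows their obstruction theory is \emph{not} the flag-deformation cone you propose, but the two-term complex $\{ T(X^{[n_1]}\times X^{[n_2]})|_{X^{[n_1,n_2]}}\arrow \Extcal^1_p(\I_1,\I_2)_0\}^*\arrow L_{X^{[n_1,n_2]}}$, i.e.\ it comes from realizing $X^{[n_1,n_2]}$ as a degeneracy locus inside the smooth ambient product $X^{[n_1]}\times X^{[n_2]}$. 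That structure is not a stylistic choice: it is what makes the pushforward formula accessible, via a virtual Thom--Porteous-type theorem identifying the pushforward of the virtual class of a degeneracy locus with a Chern class of the defining complex on the ambient space.

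This points to the genuine gap in your outline. The hard content of the theorem is precisely the identification of the pushforward of $[X^{[n_1,n_2]}]^{\vir}$ to $X^{[n_1]}\times X^{[n_2]}$ with $c_{n_1+n_2}(R\homcal_{\pi}(\I_1,\I_2)[1])$, and you dispose of it as a ``direct Chern-class manipulation.'' It is not: the virtual cycle of a perfect obstruction theory is in general not the top Chern class of its obstruction complex, and your cone $C$ lives on $X^{[n_1,n_2]}$ while the asserted formula concerns a class on $X^{[n_1]}\times X^{[n_2]}$, a space on which your complex is not even defined. Bridging that requires exhibiting the nested Hilbert scheme as the vanishing/degeneracy locus of a section built from $R\homcal_{\pi}(\I_1,\I_2)$ over the product of the two smooth Hilbert schemes and proving that the induced localized top Chern class computes the virtual cycle of the obstruction theory --- this is the main theorem of \cite{GT}, not a routine verification. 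The trace-splitting and semiregularity step for removing $H^1(X,O)$ and $H^2(X,O)$ is correctly identified as an ingredient but is likewise only named, not carried out; note also that the degree $n_1+n_2$ in the Chern class exceeds the virtual rank of $R\homcal_{\pi}(\I_1,\I_2)[1]$ by $\chi(O_X)$, so the reduction is already needed just to make the formula's bookkeeping consistent, which your sketch does not address.
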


We only need the case $n_1=n_2$ of Theorem \ref{nested}.
That is:

\begin{corollary}
\label{diagonal}
Let $X$ be a smooth geometrically connected
projective surface over a field $k$. Then
the Hilbert scheme $X^{[n]}$ carries a natural
perfect obstruction theory whose virtual cycle
$$[X^{[n]}]^{\vir}\in CH_{2n}(X^{[n]})$$
whose pushforward to $X^{[n]}\times X^{[n]}$
is the Chern class $c_{2n}(R\homcal_{\pi}(\I_1,\I_2)[1])$.
\end{corollary}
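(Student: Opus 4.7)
I would derive this directly from Theorem~\ref{nested} by specializing to $n_1 = n_2 = n$, so the only substantive point is to identify the nested Hilbert scheme $X^{[n,n]}$ with $X^{[n]}$ (and the forgetful map to $X^{[n]}\times X^{[n]}$ with the diagonal embedding). For any scheme $T$, a $T$-point of $X^{[n,n]}$ is a pair $(\mathcal{Z}_1, \mathcal{Z}_2)$ of closed subschemes of $X\times T$, each flat of relative length $n$ over $T$, with $\mathcal{Z}_2\subseteq\mathcal{Z}_1$. The inclusion induces a surjection $O_{\mathcal{Z}_1}\twoheadrightarrow O_{\mathcal{Z}_2}$ of locally free $O_T$-modules of the same rank $n$, which must be an isomorphism. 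Hence $\mathcal{Z}_1 = \mathcal{Z}_2$ functorially, and $X^{[n,n]} \cong X^{[n]}$.

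Under this identification, the perfect obstruction theory and virtual class supplied by Theorem~\ref{nested} transfer to $X^{[n]}$: the virtual class sits in $CH_{n_1+n_2}(X^{[n,n]}) = CH_{2n}(X^{[n]})$, and its pushforward along the diagonal to $X^{[n]}\times X^{[n]}$ is the Chern class $c_{2n}(R\homcal_{\pi}(\I_1, \I_2)[1])$, exactly as asserted.

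There is no real obstacle here: the substantive work is already packaged inside Theorem~\ref{nested}. The one delicate point is that the scheme-theoretic identification $X^{[n,n]} = X^{[n]}$ really does hold and is not merely an equality on closed points; this is dispatched by the Nakayama-type argument above. It is worth noting that, although $X^{[n]}$ is smooth of the expected dimension $2n$ and so $CH_{2n}(X^{[n]})$ is the rank-one group generated by $[X^{[n]}]$, the virtual class $[X^{[n]}]^{\vir}$ need not equal $[X^{[n]}]$; recording the Chern class on $X^{[n]}\times X^{[n]}$ rather than merely an integer multiple of the diagonal is precisely what will make the statement usable for the cohomological applications in the sequel.
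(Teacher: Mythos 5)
Your proof of the corollary as literally stated is correct and takes the same route as the paper, which simply records Corollary \ref{diagonal} as the specialization $n_1=n_2=n$ of Theorem \ref{nested}; your functor-of-points check that $X^{[n,n]}\cong X^{[n]}$ (a surjection of locally free $O_T$-modules of the same rank is an isomorphism) is a sensible point to make explicit, and it is fine.

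However, your closing remark is wrong, and in a way that would break the application. The paper proves, in the paragraph immediately following the corollary, that $[X^{[n]}]^{\vir}$ is exactly $1$ times the fundamental class $[X^{[n]}]$: the Gholampour--Thomas obstruction theory on $X^{[n,n]}=X^{[n]}$ is $\{TX^{[n]}\oplus TX^{[n]}\arrow \Extcal^1_p(\I,\I)_0\}^*\arrow L_{X^{[n]}}$, and since $\I_1=\I_2$ here the map is the sum of two isomorphisms $TX^{[n]}\arrow\Extcal^1_p(\I,\I)_0$; hence the two-term complex is quasi-isomorphic to $TX^{[n]}$, the obstruction theory is the standard one on a smooth variety, and the virtual class is the fundamental class. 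This is not an optional refinement. Writing $[X^{[n]}]^{\vir}=m[X^{[n]}]$ in $CH_{2n}(X^{[n]})\cong\Z$, the pushforward along the diagonal is $m\Delta$, so the corollary only yields the identity $\Delta=c_{2n}(R\homcal_{\pi}(\I_1,\I_2)[1])$ used in the proof of Theorem \ref{cohomology} once one knows $m=1$. With $m$ unknown, the decomposition-of-the-diagonal argument would give $m\Delta=\sum_j\alpha_j\otimes\beta_j$ and hence only that torsion classes in $H^*(X^{[n]},\Z)$ are killed by $m$, not that they vanish. So your suggestion that recording the Chern class ``rather than merely an integer multiple of the diagonal'' is what makes the statement usable has the logic backwards: the identification of the integer multiple as $1$ is precisely the extra step the paper supplies and the one your write-up is missing.
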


Here $CH_{2n}(X^{[n]})$ is $\Z$ times the class of $X^{[n]}$,
and it follows from Gholampour-Thomas's construction
that the class
of the virtual cycle in Corollary \ref{diagonal}
is the integer 1 times the class of $X^{[n]}$.
Namely, the perfect obstruction theory on $X^{[n_1,n_2]}$
in Theorem \ref{nested} can be written as
$$\{ T(X^{[n_1]}\times X^{[n_2]})|_{X^{[n_1,n_2]}}
\arrow \Extcal^1_p(\I_1,\I_2)_0\}^*\arrow L_{X^{[n_1,n_2]}} $$
in the derived category of $X^{[n_1,n_2]}$.
Here $L_Y$ denotes the cotangent complex of $Y$, and at a point
$(I_1,I_2)$ in $X^{[n_1,n_2]}$, we define
$$\Extcal^1_p(\I_1,\I_2)_0=\coker(H^1(X,O)\arrow \Ext^1_X(I_1,I_2)),$$
where that map is associated to the given inclusion
$I_1\arrow I_2$. Here $\Extcal^1_p(\I_1,\I_2)_0$ is the tangent sheaf
to $X^{[n_1,n_2]}$. Therefore, the perfect obstruction
theory on $X^{[n]}$ in Corollary \ref{diagonal}
is 
$$\{ TX^{[n]}\oplus TX^{[n]}
\arrow \Extcal^1_p(\I_1,\I_2)_0\}^*\arrow L_{X^{[n]}}.$$
In this case, $\I_1$ and $\I_2$ are the same, and the map
is the sum of two isomorphisms $TX^{[n]}\arrow \Extcal^1_p(\I,\I)_0$.
So this perfect obstruction theory is equivalent to the
obvious one on the smooth variety $X^{[n]}$, and so the resulting
virtual cycle is 1 times the fundamental class of $X^{[n]}$.

\section{Torsion-freeness}

\begin{theorem}
\label{cohomology}
Let $X$ be a smooth complex projective surface.
If $H^*(X,\Z)$ is torsion-free,
then $H^*(X^{[n]},\Z)$ is torsion-free for every $n\geq 0$.
\end{theorem}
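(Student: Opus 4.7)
The plan is to show that the class of the diagonal $[\Delta] \in H^{4n}(X^{[n]} \times X^{[n]}, \Z)$ admits an integral K\"unneth decomposition
\[
[\Delta] = \sum_i p_1^*\alpha_i \cdot p_2^*\beta_i, \qquad \alpha_i, \beta_i \in H^*(X^{[n]}, \Z).
\]
Granted such a decomposition, torsion-freeness follows by a standard argument: for any $x \in H^*(X^{[n]}, \Z)$, the identity
\[
x \;=\; p_{2*}(\Delta \cdot p_1^* x) \;=\; \sum_i \Bigl(\textstyle\int_{X^{[n]}} \alpha_i \cdot x\Bigr)\,\beta_i
\]
exhibits the map $x \mapsto (\int_{X^{[n]}} \alpha_i \cdot x)_i$ as a split injection of $H^*(X^{[n]}, \Z)$ into a finitely generated free abelian group, whence $H^*(X^{[n]}, \Z)$ is torsion-free.

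By Corollary \ref{diagonal}, $[\Delta]$ equals the top Chern class $c_{2n}\bigl(R\homcal_\pi(\I_1, \I_2)[1]\bigr)$ of a perfect complex $E = R\homcal_\pi(\I_1, \I_2)[1]$ on $X^{[n]} \times X^{[n]}$. As a Chern class of a perfect complex, this is an integer cohomology class, and so the target identity is already pinned down integrally; the remaining task is to expand it as a K\"unneth sum without introducing denominators.

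The core technical step, following Markman's approach but applied to the Gholampour-Thomas complex rather than to an explicit rank-$2n$ bundle with a section, is to expand this top Chern class as an integer polynomial in pull-backs of tautological classes on $X^{[n]}$. I would define integer tautological classes on $X^{[n]}$ as Chern classes of perfect complexes of the form $R(p_{X^{[n]}})_*\bigl(\I \otimes p_X^* F\bigr)$ for K-theory classes $F$ on $X$. Because $\I_1$ and $\I_2$ are pulled back from $X^{[n]} \times X$ via the two natural projections, the complex $E$ is obtained by derived push-pull from $\I$. The assumption that $H^*(X, \Z)$ is torsion-free gives the integral K\"unneth isomorphism $H^*(X^{[n]} \times X^{[n]} \times X, \Z) \cong H^*(X^{[n]} \times X^{[n]}, \Z) \otimes H^*(X, \Z)$, so the push-forward $\pi_*$ converts an integer cohomology class on the total space into an integer polynomial in pull-backs $p_1^*\tau_i$ and $p_2^*\tau_j$ of tautological classes on $X^{[n]}$, weighted by the pairings $\int_X$ of integer classes on $X$. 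Combining this with the Whitney formula and the splitting principle, as in Markman's treatment, then expresses $c_{2n}(E)$ as an integer polynomial in $p_1^*\tau_i$ and $p_2^*\tau_j$; this is the desired integer K\"unneth decomposition.

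The main obstacle is integrality throughout the expansion: a direct application of Grothendieck-Riemann-Roch to $\mathrm{ch}(E)$ yields the K\"unneth form only rationally, so one has to avoid computing through the Chern character and argue instead with Chern classes of explicit perfect complexes built from $\I$, for which integrality is automatic. Markman's original argument relied on a Poisson structure to produce a genuine rank-$2n$ bundle with a section whose zero locus is the diagonal; the Gholampour-Thomas virtual-class formalism (Corollary \ref{diagonal}) replaces this by the intrinsic identity $[\Delta] = c_{2n}(E)$ in integer cohomology, with no hypothesis on $X$, which is precisely what lets the Poisson assumption be dropped.
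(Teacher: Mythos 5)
Your overall architecture matches the paper's: use Corollary \ref{diagonal} to identify $[\Delta]$ with $c_{2n}(E)$ for $E=R\homcal_\pi(\I_1,\I_2)[1]$, produce an integral K\"unneth decomposition of $[\Delta]$, and conclude by the standard correspondence argument. You also correctly identify integrality of the expansion as the crux. But your proposed resolution of that crux has a genuine gap. You write that the integral K\"unneth isomorphism for $H^*(X^{[n]}\times X^{[n]}\times X,\Z)$ lets ``the push-forward $\pi_*$ convert an integer cohomology class on the total space into an integer polynomial in pull-backs of tautological classes.'' That is true for $\pi_*$ applied to a cohomology class, but $c_{2n}(E)$ is not $\pi_*$ of anything: it is the Chern class of a derived pushforward, and Chern classes do not commute with $R\pi_*$. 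Relating $c_*(R\pi_*F)$ to classes upstairs is exactly what Grothendieck--Riemann--Roch does, which you rightly rule out as only rational; but you then offer no integral substitute for this step. ``Argue with Chern classes of explicit perfect complexes built from $\I$'' names the desired output without supplying the mechanism.

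The paper closes this gap by working in topological $K$-theory rather than in cohomology. The hypothesis that $H^*(X,\Z)$ is torsion-free is used, via the Atiyah--Hirzebruch spectral sequence, to show that $K^*(X)$ is a finitely generated free abelian group, so that the K\"unneth formula holds at the level of $K$-theory and one can write $\I=\sum_i x_i\otimes e_i$ in $K^0(X\times X^{[n]})$. The pushforward $(\pi_{13})_*$ is then computed purely $K$-theoretically by the projection formula, giving the exact integral identity $[E]=-\sum_{i,j}(x_i,x_j)\,e_i'\otimes e_j$ with $(x_i,x_j)=-\chi(x_ix_j)\in\Z$; no Todd classes or denominators appear because the pushforward happens \emph{before} any Chern classes are taken. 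Only then does one apply $c_{2n}$, using the Whitney formula together with Markman's lemma that the Chern classes of a product of two $K^1$-classes are integer polynomials in products of their Chern classes --- a point your sketch does not address and which is genuinely needed when $b_1(X)\neq 0$, since the $K$-theoretic K\"unneth formula has a $K^1\otimes K^1$ term. To repair your argument, replace the cohomological K\"unneth step by this $K$-theoretic one; as written, the passage from $c_{2n}(E)$ to an integer polynomial in tautological classes is unproved.
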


More generally, for any prime number $p$,
the same proof works $p$-locally. That is, if $H^*(X,\Z)$
has no $p$-torsion, then $H^*(X^{[n]},\Z)$ has no $p$-torsion
for every $n\geq 0$.

\begin{proof}
We follow Markman's argument on Poisson surfaces, with the extra input
of Corollary \ref{diagonal} \cite[proof of Theorem 1]{Markman}.
Bott periodicity says that topological $K$-theory is 2-periodic.
The differentials in the Atiyah-Hirzebruch spectral sequence
from $H^*(X,\Z)$ to $K^*(X)$
are always torsion \cite[section 2.4]{AH}.
Since $H^*(X,\Z)$ is torsion-free, the spectral sequence degenerates
at the $E_2$ page.
Also, the abelian group $H^*(X,\Z)$ is finitely generated
because $X$ is a closed manifold.
Therefore, $K^*(X)$ is a finitely generated free abelian group,
with $K^0(X)$ of rank $b_2(X)+2$ and $K^1(X)$ of rank $2b_1(X)$.
In this situation, the K\"unneth formula holds
for $K$-theory:
$$K^0(X\times Y)\cong \big[ K^0(X)\otimes_{\Z} K^0(Y)\big]
\oplus \big[ K^1(X)\otimes_{\Z} K^1(Y)\big] $$
for every finite CW-complex $Y$ \cite[Corollary 2.7.15]{Atiyah}.

Let $\{x_1,\ldots,x_m\}$ be a homogeneous basis
for $K^0(X)\oplus K^1(X)$. Write $u\mapsto u^{\vee}$
for the involution on $K^0$ of a space
that takes a vector
bundle to its dual, also known as the Adams operation
$\psi^{-1}$. (For a coherent sheaf $E$ on a smooth scheme $Y$, we interpret
$E^{\vee}$ to mean $\text{RHom}(E,O_Y)$ in the derived category of $Y$,
so it defines
the same operation on $K^0Y$.)
Consider the K\"unneth decomposition
$$\I=\sum_{i=1}^m x_i\otimes e_i$$
of the class of the universal ideal sheaf $\I$
in $K^0(X\times X^{[n]})$. Here the $e_i$ are some
(homogeneous) elements of $K^*(X^{[n]})$. Likewise, write
$$(\I)^{\vee}=\sum_{i=1}^m e_i'\otimes x_i$$
in $K^0(X^{[n]}\times X)$
for some (homogeneous) elements $e_i'\in K^*(X^{[n]})$.
Write $\chi\colon K^*(X)\arrow \Z$ for pushforward to a point
(which is defined because $X$ is a compact complex manifold).
For a coherent sheaf $E$, this is given by $\chi(E)=\sum_j (-1)^j h^j(X,E)$.

Write $\pi_{ij}$ for the projection from $X^{[n]}\times X\times X^{[n]}$
to the product of the $i$th and $j$th factors.
Then we have the equality
in $K^0(X^{[n]}\times X^{[n]})$:
$$(\pi_{13})_*[\pi_{12}^*(\I)^{\vee}\otimes^L \pi_{23}^*(\I)]=
\sum_{i=1}^m \sum_{j=1}^m (\pi_{13})_*(e_i'\otimes (x_ix_j)\otimes e_j).$$
For $x,y\in K^*(X)$, define $(x,y)=-\chi(xy)\in\Z$, the sign
being conventional for the Mukai pairing.
Using the projection formula, we have
$$(\pi_{13})_*[\pi_{12}^*(\I)^{\vee}\otimes^L \pi_{23}^*(\I)]=
-\sum_{i=1}^m \sum_{j=1}^m (x_i,x_j)e_i'\otimes e_j.$$

We need Markman's definition of the Chern classes
of an element of $K^1Y$, say for a finite CW complex $Y$
\cite[Definition 19]{Markman}. First, identify $K^1(Y)$
with $\K^0(\Sigma Y^{+})$, where $Y^{+}$ means the union of $Y$
with a disjoint base point, and $\K$ is the reduced $K$-theory
of a pointed space. For $u\in K^1(Y)$ and $i\geq 1/2$ congruent to 1/2
modulo $\Z$, define the Chern class $c_i(u)$ as the image
in $H^{2i}(Y,\Z)$ of $c_{i+1/2}(\widetilde{u})$, where
$\widetilde{u}$ is the corresponding element of $\K^0(\Sigma Y^{+})$,
and we identify $H^{2i}(Y,\Z)$ with $\widetilde{H}^{2i+1}(\Sigma Y^{+},\Z)$.
For $u,v\in K^1(Y)$, Markman showed that
the Chern classes of $uv\in K^0(Y)$
can be written as polynomials with integer coefficients in
the even-dimensional classes $c_i(u)c_j(v)$ \cite[Lemma 21]{Markman}.

By Corollary \ref{diagonal}, it follows that the diagonal
$\Delta\in H^{4n}(X^{[n]}\times X^{[n]},\Z)$ is given by
$$\Delta=c_{2n}\bigg( \sum_{i=1}^m \sum_{j=1}^m
(x_i,x_j)e_i'\otimes e_j\bigg).$$
By the formulas for the Chern classes of direct sums
and tensor products of elements of $K^0$, together with the result
above on Chern classes of the product of two elements of $K^1$,
it follows that $\Delta$ can be expressed as a sum
$$\Delta=\sum_{j\in J}\alpha_j\otimes \beta_j,$$
where each $\alpha_j$ and $\beta_j$ is a polynomial
with integer coefficients in the Chern classes
of $e_1,\ldots,e_m,e_1',\ldots,e_m'$.

Viewed as a correspondence, the diagonal acts 
as the identity on integral cohomology. That is, for any
element $u\in H^*(X^{[n]},\Z)$, we have
$$u=(p_1)_*(\Delta\cdot p_2^*(x)).$$
Combining this with the decomposition of the diagonal
above, we find that $u$ is a $\Z$-linear combination
of the elements $\alpha_j$:
$$u=\sum_{j\in J}\bigg( \int_{X^{[n]}}u\beta_j\bigg) \alpha_j.$$
If $u$ is torsion, then all the intersection numbers
$\int u\beta_j\in \Z$ are zero, and so $u=0$. That is,
$H^*(X^{[n]},\Z)$ is torsion-free, as we want.
\end{proof}

\section{Integral Chow motive}

Finally, we show that if the Chow motive with integral
coefficients of a smooth projective surface $X$ over a field $k$
is trivial
(a direct sum of Tate motives), then the same holds
for all Hilbert schemes $X^{[n]}$. The analogous statement
with rational coefficients is known, by de Cataldo
and Migliorini's general
description of the motive of $X^{[n]}$ with rational
coefficients \cite[Theorem 6.2.1]{dCMmotive}.

The Chow motive with integral coefficients is a direct sum
of Tate motives for every smooth complex projective rational surface,
but also for some Barlow surfaces,
which are of general type \cite[Proposition 1.9]{ACP},
\cite[Theorem 4.1]{Totaromotive}.

\begin{theorem}
\label{motive}
Let $X$ be a smooth projective surface over a field $k$. Let $R$ be
a PID of characteristic zero, meaning that
$\Z$ is a subring of $R$.
If the Chow motive of $X$ with coefficients in $R$
is a finite direct sum of Tate motives $R(a)$, then
the Hilbert scheme $X^{[n]}$ has the same property
for every $n\geq 0$.
\end{theorem}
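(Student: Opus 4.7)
The plan is to follow the strategy of Theorem~\ref{cohomology}, replacing integral cohomology by the Chow group with $R$ coefficients and topological K-theory by algebraic $K_0$. The target criterion is the standard one: a smooth projective variety $Y$ over $k$ has Chow motive $M(Y)_R$ equal to a finite sum of Tate motives if and only if its diagonal admits a decomposition
\[ \Delta_Y=\sum_\ell \gamma_\ell\times\delta_\ell\quad\text{in }CH^*(Y\times Y)_R \]
with $\gamma_\ell,\delta_\ell\in CH^*(Y)_R$; the projectors cutting out the Tate summands are then extracted by grading by codimension and orthogonalizing. So the aim is to produce such a decomposition for $Y=X^{[n]}$.

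From the hypothesis that $M(X)_R$ is a sum of Tate motives, $CH^*(X)_R$ is a finitely generated free $R$-module, the Künneth formula $CH^*(X\times Z)_R\cong CH^*(X)_R\otimes_R CH^*(Z)_R$ holds for every smooth projective $Z/k$, and (correspondingly) $K_0(X)\otimes R$ is a free $R$-module of the same total rank satisfying $K_0(X\times Z)\otimes R\cong \bigl(K_0(X)\otimes_{\Z} K_0(Z)\bigr)\otimes R$. A crucial simplification compared with Theorem~\ref{cohomology} is that in algebraic $K_0$ there is no analogue of $K^1$: every class is ``even'', its Chern classes land naturally in $CH^*$, and Markman's odd-degree technical lemma (\cite[Lemma~21]{Markman}) is not needed.

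Now choose a homogeneous $R$-basis $\{x_1,\dots,x_m\}$ of $K_0(X)\otimes R$ and write
\[ [\I]=\sum_i x_i\otimes e_i,\qquad [\I]^{\vee}=\sum_i e_i'\otimes x_i \]
in $K_0(X\times X^{[n]})\otimes R$, with $e_i,e_i'\in K_0(X^{[n]})\otimes R$. The Markman-type projection formula calculation from the proof of Theorem~\ref{cohomology} goes through verbatim, giving
\[ (\pi_{13})_*\bigl[\pi_{12}^*[\I]^{\vee}\otimes^L\pi_{23}^*[\I]\bigr]=-\sum_{i,j}(x_i,x_j)\,e_i'\otimes e_j \]
in $K_0(X^{[n]}\times X^{[n]})\otimes R$, where $(x_i,x_j):=-\chi(x_i\cdot x_j)\in\Z\subseteq R$ is the Mukai pairing. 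Corollary~\ref{diagonal} identifies $\Delta_{X^{[n]}}$ with the $c_{2n}$ of this expression in $CH^{2n}(X^{[n]}\times X^{[n]})_R$, and the Whitney-sum and tensor-product formulas expand it as an integer polynomial in the $c_*(e_i),c_*(e_j')\in CH^*(X^{[n]})_R$, producing the desired decomposition $\Delta_{X^{[n]}}=\sum_\ell \gamma_\ell\times\delta_\ell$.

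The main obstacle I anticipate is setting up the algebraic $K_0$-Künneth for $X$ with $R$ coefficients cleanly from the Tate-motive hypothesis, and checking that every step of the Markman computation (in particular the pushforward along $\pi_{13}$ and the Chern class expansion) lifts from rational to $R$-integral Chow groups without introducing spurious denominators. Once the Künneth decomposition of $[\I]$ is in place, the remaining steps (applying Corollary~\ref{diagonal}, expanding Chern classes by Whitney and tensor-product formulas, and passing from the cycle-level decomposition of $\Delta_{X^{[n]}}$ to a Tate-motive decomposition of $M(X^{[n]})_R$) are essentially formal.
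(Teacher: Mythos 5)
Your proposal follows essentially the same route as the paper: a K\"unneth formula for algebraic $K_0$ with $R$-coefficients, the Markman-style decomposition of the diagonal of $X^{[n]}$ via Corollary \ref{diagonal} (with the correct observation that the $K^1$ complications disappear), and the PID criterion turning a fully decomposed diagonal into a sum of Tate motives, which the paper takes from the proof of Theorem 4.1 of \cite{Totaromotive}. The one step you flag as an obstacle --- deriving the $K_0$-K\"unneth formula from the Tate-motive hypothesis --- is exactly what the paper supplies by citing Gorchinsky--Orlov \cite[Proposition 4.1]{GO}, which shows that the $K$-motive of $X$ with coefficients in $R$ is a sum of $K$-motives of points; with that reference your argument is complete.
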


\begin{proof}
By Gorchinsky and Orlov, since the Chow motive of $X$
with coefficients in $R$ is a finite direct sum of Tate motives
and $\Z$ is a subring of $R$,
the $K$-motive of $X$ with coefficients in $R$
is a finite direct sum of $K$-motives of points
\cite[Proposition 4.1]{GO}. It follows that the K\"unneth
formula holds for algebraic $K$-theory of products with $X$,
meaning that for every smooth projective variety $Y$, the product map
$$K_0(X)\otimes_{\Z}K_0(Y)\otimes_{\Z} R
\arrow K_0(X\times Y)\otimes_{\Z}R$$
is an isomorphism.

Given that, the proof of Theorem \ref{cohomology}
produces elements $e_i,e_i'$ in $K_0(X^{[n]})\otimes R$
using the K\"unneth formula on $X\times X^{[n]}$.
The argument then shows that the diagonal in the Chow group
$CH^{2n}(X^{[n]}\times X^{[n]})\otimes R$ is completely decomposable,
as a sum $\sum_j \alpha_j\otimes \beta_j$. Using that
$R$ is a PID, it follows that the Chow motive
of $X^{[n]}$ with coefficients in $R$ is a finite
direct sum of Tate motives $R(a)$
\cite[proof of Theorem 4.1]{Totaromotive}.
\end{proof}


\small \sc UCLA Mathematics Department, Box 951555,
Los Angeles, CA 90095-1555

totaro@math.ucla.edu

\begin{thebibliography}{99}
\bibitem{Atiyah} M.~Atiyah. {\it $K$-theory}. W.~A.~Benjamin,
New York (1967).

\bibitem{AH} M.~Atiyah and F.~Hirzebruch. Vector bundles
and homogeneous spaces. {\it Proc.\ Symp.\
Pure Math.\ }{\bf 3}, 7--38. Amer.\ Math.\ Soc. (1961).

\bibitem{ACP} A.~Auel, J.-L.~Colliot-Th\'el\`ene, and
R.~Parimala. Universal unramified cohomology of cubic fourfolds
containing a plane. {\it Brauer groups and obstruction
problems }(Palo Alto, 2013), 29--56. Birkh\"auser (2017).

\bibitem{dCManalytic} M.~A.~de Cataldo and L.~Migliorini.
The Douady space of a complex surface.
{\it Adv.\ Math.\ }{\bf 151 }(2000), 283--312.

\bibitem{dCMmotive} M.~A.~de Cataldo and L.~Migliorini.
The Chow groups and the motive of the Hilbert scheme
of points on a surface. {\it J.\ Alg.\ }{\bf 251 }(2002),
824--848.

\bibitem{ES} G.~Ellingsrud and S.~Str\o mme. On the homology
of the Hilbert scheme of points in the plane.
{\it Invent.\ Math.\ }{\bf 87 }(1987), 343--352.

\bibitem{GT} A.~Gholampour and R.~P.~Thomas. Degeneracy loci,
virtual cycles and nested Hilbert schemes. I.
{\it Tunisian J.\ Math.\ }{\bf 2 }(2020), 633--665.

\bibitem{GO} S.~Gorchinskiy and D.~Orlov.
Geometric phantom categories.
{\it Publ.\ Math.\ IHES }{\bf 117 }(2013), 329--349.

\bibitem{GottscheICM} L.~G\"ottsche. Hilbert schemes
of points on surfaces. {\it Proceedings of the International
Congress of Mathematicians }(Beijing, 2002), v. 2,
483--494. Higher Ed.\ Press, Beijing (2002).

\bibitem{LQ} Wei-Ping Li and Zhenbo Qin.
Integral cohomology of Hilbert schemes of points
on surfaces.
{\it Comm.\ Anal.\ Geom.\ }{\bf 16 }(2008), 969--988.

\bibitem{Markman} E.~Markman. Integral generators
for the cohomology ring of moduli spaces of sheaves
over Poisson surfaces.
{\it Adv.\ Math.\ }{\bf 208 }(2007), 622--646.

\bibitem{TotaroHilbert} B.~Totaro. The integral cohomology
of the Hilbert scheme of two points.
{\it Forum Math.\ Sigma }{\bf 4 }(2016), e8, 20 pp.

\bibitem{Totaromotive} B.~Totaro. The motive
of a classifying space.
{\it Geometry and Topology }{\bf 20-4 }(2016), 2079--2133.

\end{thebibliography}
\end{document}